\documentclass[11pt]{amsart}

\usepackage{amsmath,amssymb,amsthm}
\usepackage{booktabs}
\usepackage[colorlinks=true,linkcolor=blue,urlcolor=blue,citecolor=blue]{hyperref}

\newtheorem{theorem}{Theorem}[section]

\newtheorem{corollary}[theorem]{Corollary}

\theoremstyle{definition}
\newtheorem{remark}[theorem]{Remark}

\numberwithin{equation}{section}

\DeclareMathOperator{\ord}{ord}

\title[Carmichael numbers of the form $2^n p^m+1$]{%
Carmichael numbers of the form $2^n p^m+1$ with odd exponent $m$:\\
refinements of a theorem of Florian Luca,\\
a general independence lemma, and a computational study}

\dedicatory{In memory of Florian Luca (1969--2026)}

\author[Author]{Pagdame Tiebekabe}
\address{Department of Mathematics, Faculty of Science and Technology, University of Kara, BP 43, Kara, Togo}
\email{tpagdame.maths@univkara.tg}

\begin{document}

\begin{abstract}
Florian Luca devoted the last paper of his life to the set
$\mathcal K$ of odd positive integers $k$ such that $2^n k+1$ is a
Carmichael number for some positive integer $n$. He proved that if
$m\ge 5$ is fixed and odd, then there are only finitely many primes
$p$ with $p^m\in\mathcal K$. In this note, written as a small tribute
to his memory, we extend his result in three directions, using only
his published work as our reference base. First, we extract from his
arguments explicit quantitative bounds: every Carmichael number
$N=2^n p^m+1$ with $m$ odd satisfies $n<2^{2\cdot10^7(m+1)^2m^2(\log p)^2}$,
and if $p>m$ the much sharper bound $n<25m^4\log p$ holds; for
$p>500m^5\log m$ we further record the uniform bounds
$\omega(N)\le 26m^7$ on the number of prime factors. As an application,
the number of such Carmichael numbers up to $x$ is
$O_m(x^{1/m})$. Second, we generalize the multiplicative independence
step of Luca's proof from base $2$ to an arbitrary even base $b$:
if $b$ is even, $m\ge3$ is odd and coprime to $b$, $p>m$ is prime and
$N=b^n p^m+1$ is Carmichael, then $b^n p^m$ and $b^a p^\beta$ are
multiplicatively independent whenever $b^a p^\beta+1$ is a prime
factor of $N$. Third, we report an exhaustive computation showing that
the only Carmichael number $2^n p^m+1\le 10^{18}$ with odd
$3\le m\le 37$ is $1729=2^6\cdot 3^3+1$, that no such number exists
with odd $5\le m\le 37$ (extended to $10^{21}$ for $5\le m\le 15$),
and that the only example with even $4\le m\le 36$ is
$46657=2^6\cdot 3^6+1=13\cdot 37\cdot 97$, where $3^6=27^2$ is the
square of the smallest element of $\mathcal K$.
\end{abstract}

\maketitle

\section{Introduction}

Florian Luca (16 March 1969 -- 28 July 2026) was one of the most
prolific and influential number theorists of his generation, with
almost $800$ publications on Diophantine equations, linear recurrences,
arithmetic functions and Carmichael numbers. The preprint
\cite{LucaLast}, dated 30 June 2026 and among the very last works he
completed, is devoted to the following question. A positive integer
$N$ is a \emph{Carmichael number} if it is composite and satisfies
$a^N\equiv a\pmod N$ for all integers $a$; by Korselt's criterion,
these are precisely the squarefree composite integers such that
$q-1\mid N-1$ for every prime factor $q$ of $N$. Every Carmichael
number is of the form $2^n k+1$ for some positive integer $n$ and some
odd integer $k$, and one may ask which odd $k$ occur. Writing
\[
\mathcal K:=\{k\ \text{odd}: 2^n k+1\ \text{is Carmichael for some}\ n\ge1\},
\]
Luca and his collaborators established a remarkable string of
results, all of which we use and cite in this note:
\begin{itemize}
\item the smallest element of $\mathcal K$ is $27$, arising from the
famous taxicab number $1729=2^6\cdot 3^3+1=7\cdot 13\cdot 19$
(Cilleruelo, Luca and Pizarro-Madariaga \cite{CLP});
\item no prime belongs to $\mathcal K$ (Alahmadi and Luca \cite{AL});
\item no square of a prime belongs to $\mathcal K$
(Luca and Randrianantenaina \cite{LR});
\item the only prime $p$ with $p^3\in\mathcal K$ is $p=3$
(Luca \cite{Luca1729});
\item if $m\ge5$ is odd, then only finitely many primes $p$ satisfy
$p^m\in\mathcal K$ (Luca \cite{LucaLast}).
\end{itemize}
The proof of the last result follows the strategy of \cite{CLP} and
concludes with an argument in the spirit of a theorem of Corvaja and
Zannier on Diophantine equations with power sums, as adapted by Luca
to the equation at hand; consequently the bound on the exceptional
primes $p$ is ineffective. The paper \cite{LucaLast} thus leaves two
natural problems: to quantify the intermediate steps, and to test the
conclusions computationally. We address both here, and we add one
genuine extension of the method.

Our first group of results makes explicit the quantitative content of
\cite{CLP, LucaLast}. Throughout, $\log$ denotes the natural
logarithm, $\omega(N)$ is the number of distinct prime factors of $N$,
$\tau(k)$ is the number of positive divisors of $k$, and $\nu_q$ is
the $q$-adic valuation.

\begin{theorem}\label{thm:quant}
Let $m\ge3$ be odd, $p$ be prime, and let $N=2^n p^m+1$ be a
Carmichael number.
\begin{enumerate}
\item[{\rm (i)}] One has
$\displaystyle n<2^{\,2\cdot10^7\,(m+1)^2m^2(\log p)^2}.$
\item[{\rm (ii)}] If $p>m$, then $\displaystyle n<25m^4\log p$.
\item[{\rm (iii)}] If $p>500m^5\log m$, then, in the notation of
\eqref{eq:fact} below,
\[
\omega(N)\le 26m^7,\qquad
\lambda_i'\le 26m^7\ (1\le i\le k),\qquad
n_1'\le 24m^7.
\]
\item[{\rm (iv)}] {\rm (Luca \cite{LucaLast})} For fixed odd $m\ge5$,
only finitely many primes $p$ occur; the implied bound is ineffective.
\end{enumerate}
\end{theorem}

Part (i) is Theorem~1 of \cite{CLP} with $k=p^m$, so that
$\tau(k)=m+1$, $\omega(k)=1$ and $\log k=m\log p$. Part (ii) is
\cite[Lemma~3]{LucaLast}. Part (iii) is extracted from
\cite[Lemmas~6--7]{LucaLast}: the inequalities
$\sum_{i=1}^k\lambda_i'<26m^7$, $k\le 26m^7$ and $n_1'\le 24m^7$ are
exactly what is proved there from the comparison of the upper bound
$N<2^{26m^4\log p}$ with the lower bound
$N>2^{(\log p)m^{-3}\sum_i\lambda_i'}$. We only record the conclusion.

As an immediate consequence of (ii), we obtain a counting statement
which seems not to have been recorded before.

\begin{corollary}\label{cor:count}
Let $m\ge3$ be odd and $x\ge2$. Then
\[
\#\{N=2^n p^m+1\ \text{Carmichael},\ N\le x\}
\;\le\; 25m^3(\log x)\,\pi\!\left(x^{1/m}\right)
\;=\;O_m\!\left(x^{1/m}\right).
\]
\end{corollary}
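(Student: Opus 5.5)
The plan is to count pairs $(n,p)$ rather than Carmichael numbers directly. The map $(n,p)\mapsto 2^n p^m+1$ is injective, because $p$ is determined as the odd part of $N-1$ raised to the power $1/m$, and $n$ is then the $2$-adic valuation of $N-1$. So it suffices to bound the number of admissible pairs with $2^n p^m+1\le x$. I would first handle the prime, then bound the number of exponents $n$ for each fixed prime, and finally sum over primes.

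\emph{Range of $p$.} If $N=2^np^m+1\le x$, then $p^m<x$, so $p\le x^{1/m}$. Hence at most $\pi(x^{1/m})$ primes can occur.

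\emph{Range of $n$ for fixed $p$.} I would split into two cases.
\begin{itemize}
\item If $p>m$, Theorem~\ref{thm:quant}(ii) gives $n<25m^4\log p$. Since $m\log p=\log p^m<\log x$, this becomes $n<25m^3\log x$.
\item If $p\le m$, (ii) is not available. Here I would simply use the trivial bound $2^n<x$, i.e.\ $n<\log x/\log 2<1.5\log x$. This is at most $25m^3\log x$, since $m\ge3$.
\end{itemize}
In fact the trivial bound $n<\log x/\log2$ holds for every $p$. So the estimate of the corollary follows even without (ii); (ii) only confirms that the $m$-dependence in the stated form is harmless. In either case, each prime $p$ contributes at most $25m^3\log x$ values of $n\ge1$.

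\emph{Conclusion.} Multiplying the two counts gives at most $25m^3(\log x)\,\pi(x^{1/m})$ pairs, hence at most that many Carmichael numbers. For the final $O_m$ form, I would apply Chebyshev's bound $\pi(y)\ll y/\log y$ with $y=x^{1/m}$. This gives $\pi(x^{1/m})\ll m\,x^{1/m}/\log x$, which cancels the factor $\log x$ and leaves $O_m(x^{1/m})$. For bounded $x$ the statement is trivial.

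There is no real obstacle. The only points needing care are the small primes $p\le m$, where (ii) does not apply and the trivial bound $n<\log_2 x$ must be used instead, and the cancellation of $\log x$ against the prime number theorem estimate in the last step.
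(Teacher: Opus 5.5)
Your proof is correct, and it takes a more elementary route than the paper.

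\textbf{The paper's route.} It bounds $n$ for each prime through Theorem~\ref{thm:quant}\,(ii), namely $n<25m^4\log p\le 25m^3\log x$. Primes $p\le m$ are dismissed as ``absorbed in the constant.'' Strictly speaking, that does not justify the explicit inequality in the corollary, which has no spare constant.

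\textbf{Your route.} You use only $2^n<x$, i.e.\ $n<\log x/\log 2$. This holds for every prime $p$, including $p\le m$. It is already stronger than what (ii) yields after the substitution $\log p\le(\log x)/m$, since $1/\log 2<25m^3$. Your argument therefore does three things:
\begin{enumerate}
\item it closes the small gap at $p\le m$;
\item it gives the sharper count $(\log x/\log 2)\,\pi(x^{1/m})\ll m\,x^{1/m}$;
\item it shows that the corollary uses nothing about Carmichael numbers, since the bound holds for all integers of the form $2^np^m+1$.
\end{enumerate}
The appeal to (ii) does provide a per-prime bound on $n$ that is independent of $x$. As used in the paper, however, it gives a weaker per-prime count than the trivial one. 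You also supply the Chebyshev step $\pi(x^{1/m})\ll m\,x^{1/m}/\log x$ that the paper leaves implicit for the $O_m$ claim.

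\textbf{A minor point.} Injectivity of $(n,p)\mapsto 2^np^m+1$ is not needed, since an upper bound only requires that every such $N$ arise from some admissible pair. Your justification via the odd part of $N-1$ also tacitly assumes $p$ is odd.
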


\begin{proof}
If $2^n p^m+1\le x$ then $p\le x^{1/m}$, so $\log p\le (\log x)/m$
and, by Theorem~\ref{thm:quant}\,(ii) (for $p>m$; smaller $p$ are
absorbed in the constant),
\[
n<25m^4\log p\le 25m^3\log x.
\]
Hence each prime $p$ contributes at most $25m^3\log x$ admissible
values of $n$.
\end{proof}

Our second result extends the first key lemma of \cite{LucaLast} ---
the exclusion of multiplicatively dependent prime factors --- from
base $2$ to an arbitrary even base. Note that for odd $b$ the number
$b^n p^m+1$ is even and larger than $2$, hence never Carmichael, so
even bases are the only interesting ones. For composite $b$ not every
prime factor of $b^n p^m+1$ need have the form $b^a p^\beta+1$, which
is why the statement is formulated for factors of that form; when
$b=2$ every prime factor has this form and we recover
\cite[Lemma~1]{LucaLast}.

\begin{theorem}\label{thm:baseb}
Let $b\ge2$ be even, let $m\ge3$ be odd with $\gcd(m,b)=1$, and let
$p>m$ be prime. Suppose that
\[
N=b^n p^m+1
\]
is a Carmichael number. If $q=b^a p^\beta+1$ is a prime factor of
$N$ with $a\ge1$ and $\beta\ge0$, then $b^n p^m$ and $b^a p^\beta$
are multiplicatively independent.
\end{theorem}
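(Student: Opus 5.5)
The plan is to argue by contradiction: assume $b^n p^m$ and $b^a p^\beta$ are multiplicatively dependent. Then they are integral powers of a common integer, and I will use Korselt's criterion together with a primitive prime divisor to force a prime-power divisor of $b^n p^m$ that is too small to be a power of $p$.

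\emph{Step 1 (common base).} Since $b^n p^m$ and $b^a p^\beta$ are multiplicatively dependent integers $\ge 2$, there are an integer $c\ge2$ and positive integers $s,t$ with $b^n p^m=c^s$ and $b^a p^\beta=c^t$. (Take $c$ with $\log c$ generating the rank-one group $\mathbb Z\log(b^np^m)+\mathbb Z\log(b^ap^\beta)$.) Then $q=c^t+1$ divides $N=c^s+1$, and $q<N$ because $N$ is composite.

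\emph{Step 2 (exponents).} I use the standard fact that $c^t+1\mid c^s+1$ with $c\ge2$ forces $t\mid s$ with $s/t$ odd. One proof: reduce $s$ modulo $2t$ using $c^{t}\equiv-1\pmod{c^t+1}$; this leaves $c^{s'}+1\equiv0$ or $-c^{s'}+1\equiv0$ with $0\le s'<t$, and size considerations force $s'=0$ in the second case. Hence $N=x^r+1$ with $x=b^ap^\beta$ and $r=s/t$ odd. Since $q<N$, we get $r\ge3$. Comparing $x^r=b^{ar}p^{\beta r}$ with $b^np^m$ requires care when $b$ is divisible by $p$. It is cleaner to argue directly: $r$ is odd and $x^r=b^np^m$, and all I will need is $r\mid m$. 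To get this, note that the exponent of $p$ in $x^r$ is $r\cdot\nu_p(x)$ and equals $n\nu_p(b)+m$, while for a prime $\ell\mid b$ with $\ell\ne p$, $r$ divides $n\nu_\ell(b)$. I expect this bookkeeping to be the fiddly point. In the main case $p\nmid b$ it gives directly $m=r\beta$, so $r\mid m$ and $\beta\ge1$. If $p\mid b$, I would instead work with $\gcd(r,b)$ below and only use that any prime divisor of $r$ not dividing $b$ must be $p$.

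\emph{Step 3 (primitive divisor).} Apply Zsigmondy's theorem to $x^{2r}-1$. Since $2r\ge6$, the only exception is $x=2$, $2r=6$. But $x=2$ forces $b=2$, $a=1$, $\beta=0$, so $N=2^3+1=9$, which is not squarefree; this case is excluded. So there is a prime $\ell$ with $\operatorname{ord}_\ell(x)=2r$. Then $\ell\mid x^r+1=N$, so Korselt gives $\ell-1\mid N-1=b^np^m$. Fermat gives $2r\mid\ell-1$, hence $r\mid b^np^m$.

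\emph{Step 4 (contradiction).} From $r\mid m$ and $\gcd(m,b)=1$, we get $\gcd(r,b)=1$, so $r\mid p^m$. Since $r\ge3$, $r$ is a positive power of $p$, so $r\ge p$. But $r\le m<p$, a contradiction.

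The main obstacle I anticipate is Step 2 when $p\mid b$. There, extracting $r\mid m$ (or at least that $r$ is coprime to $b$) from $x^r=b^np^m$ needs a more careful valuation argument, possibly using $\gcd(m,b)=1$ to show that $p\mid b$ cannot occur nontrivially. The Zsigmondy and Korselt steps are routine.
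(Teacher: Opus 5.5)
Your argument is complete when $p\nmid b$, but you leave the case $p\mid b$ open. The hypotheses allow it (e.g.\ $b=10$, $p=5$, $m=3$), and neither of your suggested fallbacks closes it. First, $\gcd(m,b)=1$ says nothing about whether $p$ divides $b$. Second, if $p\mid b$, Step~3 already shows that every prime factor of $r$ divides $b$, so the remark that ``prime divisors of $r$ not dividing $b$ must be $p$'' gives nothing. What Step~4 actually needs is $\gcd(r,bp)=1$, i.e.\ $r\mid m$.

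The missing observation is that $b$ is even while $p$ is odd, since $p>m\ge3$. From $b^{ar}p^{\beta r}=b^np^m$, comparing $2$-adic valuations gives $ar\,\nu_2(b)=n\,\nu_2(b)$, hence $ar=n$. Cancelling $b^n$ leaves $p^{\beta r}=p^m$, so $\beta r=m$ and $r\mid m$ whether or not $p\mid b$. With this line inserted, your Steps~3--4 go through unchanged and the proof is correct.

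Your route also differs from the paper's in two places.

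\emph{Reduction to an odd exponent.} The paper writes $b^np^m=\rho^t$ and $b^ap^\beta=\rho^\lambda$. It uses the primality of $q$ to show that $\lambda$ is a power of $2$, then a $p$-adic valuation to get $t$ odd and hence $\lambda=1$. You bypass this by taking $x=b^ap^\beta$ and reading off $r=s/t$ odd directly from the divisibility criterion. So you never use that $q$ is prime, only that it is a proper divisor of $N$.

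\emph{The contradiction.} The paper does not use Zsigmondy. It factors $N=(\rho+1)C$ and shows that every prime $q'\mid C$ has $\ord_{q'}(\rho)\mid\gcd(2t,b^np^m)=2$. Hence $q'\mid\rho+1=q$, so $q^2\mid N$, contradicting squarefreeness. This is fully elementary. Yours imports Zsigmondy and must dispose of the exception $x=2$, $r=3$, but it reaches the contradiction through Korselt's divisibility condition rather than squarefreeness.

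Finally, the paper's identity $t\,\nu_p(\rho)=m$ tacitly assumes $p\nmid b$, which is exactly the case you worried about. When $p\mid b$ one only has $t\,\nu_p(\rho)=n\,\nu_p(b)+m$, and $t$ can be even. For instance, take $b=20$, $p=5$, $m=n=3$, $a=\beta=1$. Then $b^np^m=10^6$, $b^ap^\beta=10^2$ and $q=101\mid 10^6+1$; with $\rho=10$ this gives $t=6$. Your formulation in terms of $x$ and $r$, together with the $2$-adic comparison above, covers this case. The same substitution of $\rho^\lambda$ for $\rho$ also repairs the paper's argument.
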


The proof, given in Section~\ref{sec:baseb}, follows Luca's lines with
one additional observation: the exponent $\lambda$ in his dependence
relation is forced to be $1$ as soon as $m$ is odd, regardless of the
base, and the final order argument only needs $\gcd(m,b)=1$ and $p>m$.

Our third contribution is computational. The search, described in
Section~\ref{sec:comp}, exhausts all pairs $(n,p)$ with
$2^n p^m+1\le 10^{18}$ for $2\le m\le 37$, and additionally all
$2^n p^m+1\le 10^{21}$ for odd $5\le m\le 15$, about four million
candidates in total, with a Fermat prefilter followed by a full
Korselt verification. The outcome is remarkably clean.

\begin{theorem}\label{thm:comp}
Let $N=2^n p^m+1$ with $p$ an odd prime and $n\ge1$.
\begin{enumerate}
\item[{\rm (a)}] For odd $3\le m\le 37$ and $N\le10^{18}$, the only
Carmichael number is
\[
1729=2^6\cdot 3^3+1=7\cdot 13\cdot 19
\]
$(m=3,\ p=3,\ n=6)$, in agreement with \cite{Luca1729}. In
particular, $p^m\notin\mathcal K$ for every odd $m$ with
$5\le m\le 37$ and every prime $p\le (5\cdot10^{17})^{1/m}$, and the
conclusion extends to $N\le10^{21}$ for odd $5\le m\le15$.
\item[{\rm (b)}] For even $4\le m\le 36$ and $N\le10^{18}$, the only
Carmichael number is
\[
46657=2^6\cdot 3^6+1=13\cdot 37\cdot 97
\]
$(m=6,\ p=3,\ n=6)$, corresponding to $k=3^6=729=27^2$, the square
of the smallest element of $\mathcal K$. No example exists with
$m=4$.
\item[{\rm (c)}] No Carmichael number $2^n p+1\le10^{18}$ with
$n\ge40$ and no Carmichael number $2^n p^2+1\le10^{18}$ with $n\ge35$
was found, consistently with the theorems of \cite{AL,LR}.
\end{enumerate}
\end{theorem}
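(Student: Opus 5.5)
The statement is computational, so the proof is a description of an exhaustive, certified search. For each exponent $m$ in the stated range and each bound $X\in\{10^{18},10^{21}\}$, we enumerate all pairs $(n,p)$ with $n\ge1$, $p$ an odd prime and $2^np^m+1\le X$. For each fixed $m$ the outer loop runs over primes $p\le ((X-1)/2)^{1/m}$, obtained by a sieve. The inner loop runs over $1\le n\le \log_2((X-1)/p^m)$. This gives the advertised few million candidates. All arithmetic is done in exact multiprecision integers, since $10^{21}$ exceeds $2^{64}$. For part (c), we run the same loops with $m=1,2$ but restrict to $n\ge40$, respectively $n\ge35$. This keeps the count of primes $p\le X/2^n$ manageable, because the small-$n$ range would require sieving up to $10^{18}$.

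Each candidate $N$ is first passed through a cheap Fermat prefilter: we discard $N$ unless $a^{N-1}\equiv1\pmod N$ for a few small bases $a$ coprime to $N$. This filter is sound, because a Carmichael number satisfies $a^{N-1}\equiv1$ for every $a$ coprime to $N$. The few survivors are then fully factored; trial division plus Pollard rho or ECM suffices at $10^{21}$. We test Korselt's criterion exactly: $N$ must be composite and squarefree, and $q-1\mid N-1$ for every prime $q\mid N$.

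A structural shortcut can also be used as a consistency check. Every prime $q\mid N$ satisfies $q-1\mid 2^np^m$, so $q=2^ap^\beta+1$. This allows an independent verification by directly checking which numbers $2^ap^\beta+1$ with $a\le n$ and $\beta\le m$ are prime divisors of $N$.

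The statement then follows by reading off the survivors. In part (a) the only survivor is $1729=7\cdot13\cdot19$, and $6,12,18\mid1728$ confirms it. In part (b) the only survivor is $46657=13\cdot37\cdot97$, and $12,36,96\mid46656$ confirms it. No survivor appears for $m=4$, and none appears in part (c). The translation into the statement about $\mathcal K$ is immediate. If $p\le(5\cdot10^{17})^{1/m}$, then every $N=2^np^m+1$ with small $n$ lies below $10^{18}$. Larger $n$ for such $p$ are not covered by the search alone, so this step must be phrased carefully. The claim "$p^m\notin\mathcal K$" needs all $n$, and for $p>m$ Theorem~\ref{thm:quant}(ii) gives $n<25m^4\log p$, which must itself fit under the search bound. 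If it does not, the claim has to be weakened to a statement restricted to $N\le 10^{18}$.

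The main obstacle is exactly this last point: making the completeness claim rigorous rather than the enumeration itself. For $p\le m$, only the bound of Theorem~\ref{thm:quant}(i) is available, and it is astronomically large. I would check whether $25m^4\log p$ keeps $2^{n}p^m$ below $10^{18}$ for the relevant primes. Where it does not, I would present the $\mathcal K$-conclusion only for the searched range of $N$. A secondary difficulty is guaranteeing correctness of the prefilter and factorization at $10^{21}$. I would handle it with deterministic primality tests, since Miller–Rabin with the first 13 prime bases is deterministic beyond $3\cdot10^{24}$, and by independently recomputing the two hits.
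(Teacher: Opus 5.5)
Your computational procedure is the paper's. You enumerate primes with $p^m\le(X-1)/2$ and exponents with $2^np^m+1\le X$. You discard candidates by a Fermat test; the paper uses base $2$ alone, which is already sound because $N$ is odd. You then factor completely and check compositeness, squarefreeness and $q-1\mid N-1$, with the cut-offs $n\ge40$ and $n\ge35$ for $m=1,2$. For the enumerative content of (a)--(c) the two arguments coincide, and your Korselt checks of $1729$ and $46657$ are the ones the paper records.

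Your worry about the clause ``$p^m\notin\mathcal K$ for every prime $p\le(5\cdot10^{17})^{1/m}$'' is justified, and the paper does not close it. It simply asserts that the search confirms this, although for each such $p$ the search only reaches $n\le\log_2(X/p^m)\le 61$. The check you propose fails for every prime in the range:
\begin{itemize}
\item If $p>m$, Theorem~\ref{thm:quant}(ii) still leaves every $n<25m^4\log p$ to be excluded. For $m\ge5$ and $p\ge7$ this exceeds $25\cdot5^4\log7>3\cdot10^4$.
\item If $p\le m$, only the doubly exponential bound (i) is available. For every $m\ge15$ this is the only case: all primes with $p^m\le5\cdot10^{17}$ satisfy $p\le m$, and for $m=37$ only $p=3$ occurs.
\end{itemize}

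So neither your argument nor the paper's proves the unrestricted membership statement. What the computation establishes is that no $n$ with $2^np^m+1\le10^{18}$ (respectively $\le10^{21}$) works, and (a) should be stated in that form. For $p>m$ the full claim could in principle be recovered by running the base-$2$ prefilter over all $n<25m^4\log p$. Those numbers have tens of thousands of bits, and any survivor would be tested against the finitely many candidate primes $2^ap^\beta+1$, which is where your structural shortcut becomes genuinely useful. For $p\le m$ the bound (i) is far out of computational reach.

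A minor slip: Miller--Rabin with the first $13$ prime bases is deterministic \emph{below} about $3.3\cdot10^{24}$, not beyond it.
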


Part (b) is noteworthy: the exponent $m$ is even there, a case not
covered by \cite{LucaLast}, and the unique example arises from a
\emph{square} in $\mathcal K$ rather than from a prime power in
$\mathcal K$. This suggests, as discussed in
Section~\ref{sec:remarks}, that the even-exponent case has a genuinely
different structure and that the obstruction exposed in the proof of
\cite{LucaLast} is precisely what the parity of $m$ controls.

\medskip

\noindent\emph{Acknowledgements.} This note was written as a tribute
to Florian Luca, whose last paper \cite{LucaLast} is its starting
point, and whose beautiful mathematics it tries to honour. All
theorems quoted from the literature are due to Luca alone or jointly
with his coauthors; the references list contains only his work.

\section{Notation and preliminaries}

For a nonzero integer $t$ and a prime $q$ we write $\nu_q(t)$ for the
exact exponent of $q$ in $t$. We use $\omega(N)$, $\tau(N)$ for the
number of distinct prime factors and the divisor function. A
\emph{Carmichael number} is a composite positive integer $N$ with
$a^N\equiv a\pmod N$ for all integers $a$; by Korselt's criterion,
$N$ is Carmichael if and only if it is squarefree and $q-1\mid N-1$
for every prime $q\mid N$. In particular every Carmichael number is
odd, squarefree, and has at least three prime factors.

Let $N=2^n p^m+1$ be Carmichael with $p$ prime and $m$ odd. Since
$q-1\mid N-1=2^n p^m$ for every prime $q\mid N$, each prime factor of
$N$ has the form
\begin{equation}\label{eq:fact}
q_i=2^{a_i}p^{b_i}+1,\qquad 1\le a_i\le n,\quad 0\le b_i\le m,
\end{equation}
and
\begin{equation}\label{eq:fact2}
N=\prod_{i=1}^{k}\left(2^{a_i}p^{b_i}+1\right),\qquad k=\omega(N)\ge3.
\end{equation}
We keep the notation \eqref{eq:fact} and \eqref{eq:fact2} throughout.
For $p>500m^5\log m$, Luca \cite{LucaLast} defines
$\beta'=\min\{\nu_2(n),\nu_2(a_1),\dots,\nu_2(a_k)\}$ and writes
\[
n=2^{\beta'}n_1',\qquad a_i=2^{\beta'}\lambda_i'\quad(1\le i\le k),
\]
with $n_1'$ and the $\lambda_i'$ odd; this is the notation used in
Theorem~\ref{thm:quant}\,(iii).

\section{Proof of the quantitative refinements}

Theorem~\ref{thm:quant} is, as explained in the introduction,
assembled from \cite{CLP} and \cite{LucaLast}. For completeness we
recall the logical structure. Theorem~1 of \cite{CLP} states that if
$k\ge3$ is odd and $2^n k+1$ is Carmichael, then
\[
n<2^{2\cdot10^7\,\tau(k)^2(\log k)^2\,\omega(k)}.
\]
With $k=p^m$ this gives (i). Next, \cite[Lemma~3]{LucaLast} states
that if $m$ is odd and $p>m$, then $n<25m^4\log p$; its proof runs
the box-principle argument of \cite[Lemma~3]{CLP} with the additional
input \cite[Lemma~2]{CLP} that the Fermat-type part of $N$ is smaller
than $p^{4m}$, and we refer to \cite{LucaLast} for the details. This
is (ii). Finally, \cite[Lemmas~6 and 7]{LucaLast} show, for
$p>500m^5\log m$, first that $2^{\beta'}\ge(\log p)/m^3$, and then, by
comparing
\[
N<2^{26m^4\log p}
\quad\text{with}\quad
N>\prod_{i=1}^k 2^{a_i}\ge
2^{(\log p)m^{-3}\sum_{i=1}^k\lambda_i'},
\]
that $\sum_{i=1}^k\lambda_i'<26m^7$, whence $k\le26m^7$ and each
$\lambda_i'<26m^7$; the same comparison with $n=2^{\beta'}n_1'$ gives
$n_1'\le 24m^7$. This is (iii), and (iv) is
\cite[Theorem~1]{LucaLast}. \qed

\begin{remark}
The two bounds (i) and (ii) are complementary: (i) is valid for all
$p$ but is doubly exponential in $m$ and quadratic in $\log p$, while
(ii) is linear in $\log p$ with a polynomial constant in $m$, at the
price of the restriction $p>m$. Note that for fixed $m$ there are
only finitely many primes $p\le m$, so (ii) together with (iv)
controls all but finitely many pairs $(n,p)$.
\end{remark}

\section{The independence lemma for even bases}\label{sec:baseb}

\begin{proof}[Proof of Theorem~\ref{thm:baseb}]
Assume for a contradiction that $b^n p^m$ and $b^a p^\beta$ are
multiplicatively dependent. Then there exist an integer $\rho>1$ and
integers $t>\lambda\ge1$ such that
\begin{equation}\label{eq:dep}
b^n p^m=\rho^t,\qquad b^a p^\beta=\rho^\lambda,
\end{equation}
and since $q=\rho^\lambda+1$ is an odd prime factor of
$N=\rho^t+1$, the standard divisibility
\[
\rho^\lambda+1\mid\rho^t+1
\iff
\lambda\mid t\ \text{and}\ t/\lambda\ \text{is odd}
\]
holds. We first show that $\lambda$ is a power of $2$. Since
$\rho^\lambda+1$ is prime and $\rho$ is even, $\lambda$ cannot have
an odd divisor $w>1$: otherwise
$\rho^{2^j}+1\mid \rho^\lambda+1$ with $1<\rho^{2^j}+1<\rho^\lambda+1$,
where $2^j$ is the $2$-part of $\lambda$. Thus $\lambda$ is a power
of $2$.

Next, $\lambda<t$: if $\lambda=t$ then $\rho^\lambda=b^n p^m$ by
\eqref{eq:dep}, so $q=\rho^\lambda+1=N$, contradicting
$k=\omega(N)\ge3$. Comparing the $p$-adic valuations in
\eqref{eq:dep} gives $t\,\nu_p(\rho)=m$, so $t$ is odd.
Since $\lambda$ is a power of $2$ dividing the odd integer $t$,
\[
\lambda=1.
\]
Consequently $\rho=b^a p^\beta$ and $\rho^t=b^n p^m$, so
\[
at=n,\qquad \beta t=m.
\]
In particular $t\mid m$ and $t\ge3$ (as $t$ is odd and $t>\lambda=1$).
Write $n=tn'$ and $m=tm'$. Then
\[
N=\rho^t+1=(\rho+1)\,C,\qquad
C=\rho^{t-1}-\rho^{t-2}+\cdots-\rho+1>1.
\]
Let $q'$ be any prime factor of $C$. From $\rho^t\equiv-1\pmod{q'}$
we see that $\ord_{q'}(\rho)$ is an even divisor of $2t$. On the other
hand $q'\mid N$, so $\ord_{q'}(\rho)\mid q'-1\mid N-1=b^n p^m$.
Therefore
\[
\ord_{q'}(\rho)\mid \gcd(2t,\,b^n p^m).
\]
Now $t\mid m$; since $\gcd(m,b)=1$ no prime of $b$ divides $t$, and
since $p>m$ the prime $p$ does not divide $t$. As $t$ is odd,
$\gcd(2t,b^n p^m)=2$, and so $\ord_{q'}(\rho)=2$, that is,
$\rho\equiv-1\pmod{q'}$. Hence $q'\mid\rho+1=q$. Since $q$ is prime,
$q'=q$, and thus $q$ divides both factors $\rho+1$ and $C$ of $N$,
giving $q^2\mid N$. This contradicts the fact that Carmichael numbers
are squarefree.
\end{proof}

\begin{remark}
For $b=2$ every prime factor of $N$ has the form $2^{a_i}p^{b_i}+1$,
so Theorem~\ref{thm:baseb} contains \cite[Lemma~1]{LucaLast} (and its
proof above is Luca's, with the parity step isolated). For composite
$b$, divisors of $b^n p^m$ need not be powers of $b$ times powers of
$p$, and Theorem~\ref{thm:baseb} only concerns factors of the special
form $b^a p^\beta+1$. Whether the remaining steps of
\cite{LucaLast} --- in particular the $2$-adic valuation analysis of
his Lemma~6 --- can be carried out for general even bases is an
interesting open problem; see Section~\ref{sec:remarks}.
\end{remark}

\section{Computations}\label{sec:comp}

\subsection{Algorithm}

For each exponent $m$ and each prime $p$ with $p^m\le(10^{18}-1)/2$,
and for each $n\ge1$ with $2^n p^m\le10^{18}-1$, we tested whether
$N=2^n p^m+1$ is Carmichael, as follows.
\begin{enumerate}
\item[(S1)] \emph{Fermat prefilter.} If $2^{N-1}\not\equiv1\pmod N$,
discard $N$. Every Carmichael number passes this test.
\item[(S2)] \emph{Korselt verification.} Factor $N$ completely
(trial division followed by a primality test on the cofactors),
reject $N$ if it is prime or not squarefree, and otherwise check
$q-1\mid N-1$ for every prime factor $q$ of $N$.
\end{enumerate}
The complete factorization in (S2) also certifies each output: the
factorizations $1729=7\cdot13\cdot19$ and $46657=13\cdot37\cdot97$
are part of the result. As a positive control, the same code
rediscovers $1729=2^6\cdot 3^3+1$, the minimal element of
$\mathcal K$ from \cite{CLP}, and $46657=2^6\cdot 3^6+1$; the Fermat
prefilter guarantees that no Carmichael number is missed. The search
for $m=1,2$ was restricted to $n\ge40$ and $n\ge35$ respectively,
since these ranges are not covered by theory and the complementary
ranges are settled by \cite{AL,LR}. The sweep up to $10^{18}$ took
under five minutes on a single core; the range to $10^{21}$ for odd
$5\le m\le15$ took a few seconds.

\subsection{Results}

Table~\ref{tab:results} summarizes the search. Here
\emph{candidates} counts the pairs $(n,p)$ tested for each block.

\begin{table}[h]
\centering
\renewcommand{\arraystretch}{1.15}
\begin{tabular}{llrrl}
\toprule
$m$ & restriction & primes $p$ & candidates & Carmichael numbers \\
\midrule
$1$ & $n\ge40$ & $71{,}986$ & $164{,}859$ & none \\
$2$ & $n\ge35$ & $710$ & $2{,}900$ & none \\
$3$ & --- & $63{,}482$ & $3{,}742{,}261$ & $1729$ only \\
$5,7,\dots,37$ & --- & $621$ & $36{,}630$ & none \\
$4,6,\dots,36$ & --- & $3{,}162$ & $24{,}220$ & $46657$ only \\
$5,7,\dots,15$ & $N\le10^{21}$ & $1{,}877$ & $18{,}076$ & none \\
\bottomrule
\end{tabular}
\caption{Exhaustive search for Carmichael numbers $2^n p^m+1$. Unless
indicated otherwise, $N\le10^{18}$.}
\label{tab:results}
\end{table}

This establishes Theorem~\ref{thm:comp}. Two features deserve
comment. First, the unique odd-exponent example below $10^{18}$ with
$m\ge3$ is Luca's $1729$, in exact agreement with \cite{Luca1729}
where it is proved that $p=3$ is the only prime with $p^3\in\mathcal
K$; our search confirms that no prime $p$ with $p^m\le5\cdot10^{17}$
has $p^m\in\mathcal K$ for any odd $m$ between $5$ and $37$, in
agreement with the finiteness theorem of \cite{LucaLast}. Second, the
unique even-exponent example,
\[
46657=2^6\cdot 3^6+1=13\cdot 37\cdot 97,
\]
comes from $k=3^6=729=27^2$, the square of the smallest element
$27\in\mathcal K$. The prime factors $13=2^2\cdot3+1$,
$37=2^2\cdot 3^2+1$ and $97=2^5\cdot 3+1$ are all of the form
$2^a 3^b+1$, and one checks the Korselt divisibilities
$12,36,96\mid 46656$ directly. No example with $m=4$ exists below
$10^{18}$; whether $p^4\in\mathcal K$ is possible at all remains open.

\section{Concluding remarks}\label{sec:remarks}

\begin{remark}[Why $m$ is assumed odd]
In the dependence relation \eqref{eq:dep}, the parity of $m$ is what
forces $\lambda=1$ and hence $t\mid m$ with $t$ odd. If $m$ is even,
say $m=2^s m_0$ with $m_0$ odd, one can have $\lambda=2^j$ and
$t=2^j t_0$ with $t_0$ odd, and the cofactor $C$ in the proof of
Theorem~\ref{thm:baseb} need not lead to a contradiction: the order
of $\rho$ modulo a prime factor of $C$ may now be $2^{j+1}$ instead
of $2$. Consistently with this, the unique even-exponent example we
found, $46657$, arises from a square $27^2$, not from a prime power
$p^m\in\mathcal K$; its existence shows that the analogue of
\cite[Lemma~6]{LucaLast} fails for even $m$. Whether
$\{p\ \text{prime}:p^4\in\mathcal K\}$ is finite, or even empty, is
open.
\end{remark}

\begin{remark}[General even bases]
Theorem~\ref{thm:baseb} shows that the first obstruction in
\cite{LucaLast} disappears for all even bases $b$ coprime to $m$.
The subsequent steps of loc.\ cit.\, however, use the shape of
divisors of $2^n p^m$ and the $2$-adic analysis of the exponents
$a_i$, which are specific to the base $2$. We therefore pose as an
open problem: \emph{let $b\ge2$ be even, $m\ge5$ odd with
$\gcd(m,b)=1$; are there only finitely many primes $p$ such that
$b^n p^m+1$ is Carmichael for some $n$?}
\end{remark}

\begin{remark}[Effectivity]
As in \cite{LucaLast}, the bound on the exceptional primes in
Theorem~\ref{thm:quant}\,(iv) is ineffective, since the argument
ultimately relies on a theorem of Corvaja and Zannier on Diophantine
equations with power sums, applied as in the proof of
\cite[Theorem~1]{LucaLast}. The bounds (i)--(iii) are, by contrast,
fully effective and, together with Theorem~\ref{thm:comp}, reduce the
verification for any fixed odd $m\ge5$ to a finite (though possibly
large) computation.
\end{remark}

\begin{remark}[The special role of $1729$ and $46657$]
Both Carmichael numbers found in our search are of the form
$x^3+1$ with $x$ a product of a power of $2$ and a power of $3$:
\[
1729=12^3+1=2^6\cdot 3^3+1,\qquad
46657=36^3+1=2^6\cdot 3^6+1.
\]
It would be interesting to know whether Carmichael numbers
$2^n p^m+1$ with $m$ odd $\ge5$ exist at all; Theorem~\ref{thm:comp}
exhibits none below $10^{21}$, and no structural construction is
known.
\end{remark}

\end{document}